\newcommand{\li}{\mathcal L}
\newcommand{\n}{\mathcal N}
\newcommand{\nc}{\n_{\complex}}
\newcommand{\complex}{\mathbb{C}}
\theoremstyle{plain}
\numberwithin{equation}{section}
\newtheorem{theorem}{Theorem}[section]
\newtheorem{corollary}[theorem]{Corollary}
\newtheorem{lemma}[theorem]{Lemma}
\newtheorem{remark}[theorem]{Remark}
\begin{document}

\title {Syzygies of some GIT quotients}

\author[Krishna Hanumanthu]{Krishna Hanumanthu}
\address{Chennai Mathematical Institute, H1 SIPCOT IT Park, Siruseri, Kelambakkam 603103, India}
\email{krishna@cmi.ac.in}

\author[S. Senthamarai Kannan]{S. Senthamarai Kannan}
\address{Chennai Mathematical Institute, H1 SIPCOT IT Park, Siruseri, Kelambakkam 603103, India}
\email{kannan@cmi.ac.in}
\date{July 24, 2015}

\maketitle

\begin{abstract}
Let $X$ be flat scheme over $\mathbb{Z}$ such that its base change,
$X_p$, to $\overline{\mathbb{F}}_p$ is Frobenius split for all primes
$p$. Let $G$ be a reductive group scheme over $\mathbb{Z}$ acting on
$X$. In this paper, we prove a result on 
the $N_p$ property 
for line bundles on GIT
quotients of $X_{\complex}$ for the action of $G_{\complex}$. We
apply our result to the special cases of (1) an action of a finite group on the
projective space and (2) the action of a maximal torus on the flag
variety of type $A_n$. 
\end{abstract}

Keywords: Syzygy, GIT quotient, flag variety
\section{Introduction}
Syzygies of algebraic varieties have been studied classically since
the time of 
Italian geometers. For instance, the question of projective normality
and normal presentation of embeddings of projective varieties in a
projective space was studied in depth. 
The subject has been revived and there is much
renewed interest since Green \cite{Green1,Green2} developed a homological
framework which encompasses the classical questions. It was noted
that projective normality and normal presentation were really
properties of a graded free resolution and $N_p$ property was defined
as a generalisation of this
property. 

We briefly review the notion of $N_p$ property. 

Let $k$ be an algebraically closed field of characteristic 0. All our
varieties are 
projective, smooth and defined over $k$. 

Let $\li$ be a very ample line bundle on a projective variety
$X$. Then $\li$ determines an embedding of $X$ into the projective
space $\mathbb{P}\big{(}H^0(X,\li)\big{)}$.  We denote by $S$ the homogeneous coordinate ring of this projective space. Then the {\it section ring} 
$R(\li)$ of $\li$ is defined as $\bigoplus_{n=0}^{\infty} H^0(X, \li^{\otimes n})$ and it is a finitely generated graded $S$-module.  One looks at the minimal graded free resolution of $R(\li)$ over $S$:
$$...\rightarrow E_i \rightarrow ... \rightarrow E_2  \rightarrow E_1 \rightarrow E_0 \rightarrow R(\li) \rightarrow 0$$
where $E_i  = \bigoplus S(-a_{i,j})$ for all $i \geq 0$ and $a_{i,j}$ are some nonnegative integers.

We say that $\li$ has $N_0$ property if $E_0 = S$. This simply means
that the embedding determined by $\li$ is projectively normal (or $\li$ is {\it normally generated}).

$\li$ is said to have $N_1$ property if $E_0 = S$ and $a_{1,j} = 2$ for all $j$. In this case, we also say that $\li$ is {\it normally presented.}  Geometrically, this means that the embedding is cut out by quadrics. 

For $p \geq 2$, we say that $\li$ has $N_p$ property if $E_0 = S$ and $a_{i,j} = i+1$ for all $i = 1, \ldots ,p$. 

Given a very ample line bundle $\li$, it is an interesting question to ask
whether it has $N_p$ property for a given $p$. 

There is extensive literature on this question. The following is a
sample of results. For line bundles on curves see \cite{GL}, on surfaces see
\cite{GP1,GP2}. For abelian varieties, see \cite{Par}. In \cite{E-L},
a general result is proved for very ample line bundles on projective varieties. 

In this paper, we are interested in the $N_p$ property for line bundles on
GIT quotients. More specifically, we consider varieties defined over
$\mathbb{Z}$
and
consider the descent of an ample line bundle to a GIT
quotient. We obtain a general result on $N_p$ property of this
descent (Corollary \ref{main}) by  using a cohomological criterion for
$N_p$ property. We 
  prove the required vanishing results using Frobenius splitting
  methods (Theorem \ref{van}). 

In \cite{KPP}, the authors consider the quotients of a projective
space $X$ for the linear action of finite solvable
groups and for 
finite groups acting by pseudo reflections. They prove that the
descent of 
$\mathcal{O}_X(1)^{\otimes |G|}$ is projectively normal. 
In
\cite{Kang}, these results were obtained for every finite group but with
a larger 
power of the descent of $\mathcal{O}_X(1) ^{\otimes |G|}$. In this
paper, we consider any finite group acting linearly on $X$ and prove a 
general result on $N_{p}$ property for the descent of $\mathcal{O}_X(1) ^{\otimes |G|}$. 
For representations $\rho: G \rightarrow GL(V)$ such that 
$\rho(G) \subset SL(V)$, it follows from 
\cite[Theorem 1]{Wat} that  $\mathbb{C}[V]^{G}$ is Gorenstein (cf
\cite[Corollary 8.2]{Stan} also). Thus $G \backslash \mathbb{P}(V)$
is also Gorenstein. In this case, with the further assumption that
the  order of
$G$ divides the dimension of $V$, we prove that the canonical line bundle on $\mathbb{P}(V)$ descends to the quotient 
$G \backslash \mathbb{P}(V)$ and it is the canonical line bundle.

A question of Fulton concerns the 
$N_p$ property of line bundles on flag varieties
(cf. \cite[Problem 4.5]{E-L}). The special case of  flag
varieties of type $A_n$ is considered in \cite{Man} and a general result is obtained.
In line with this, we consider the GIT quotient of a 
flag variety of type $A_{n}$ for the action of a maximal torus and we
obtain a result on $N_p$ property as an
application of our main result.

The organisation of the paper is as follows:

Section 2 consists of preliminaries. Cohomology of line bundles on the
quotient variety is studied in Section 3. In section 4, we prove $N_{p}$ property 
for GIT quotients of varieties which are defined over $\mathbb{Z}$. 
We apply these results to the special case of finite group quotients
in Section 5 and to 
the special case of GIT  
quotient of a flag variety of type $A_{n}$ for the action of a maximal
torus in section 6.      
\section{Preliminaries}



Given a vector bundle $F$ on a projective variety $X$ that is
generated by its global sections, we have the canonical 
surjective map:\\
\begin{eqnarray}
H^0(F) \otimes {\mathcal O}_{X} \rightarrow F.
\end{eqnarray}

Let $M_F$ be the kernel of this map.  We have then the natural exact sequence:
\begin{eqnarray}\label{cansurj}
0 \rightarrow M_F \rightarrow H^0(F) \otimes {\mathcal O}_{X} \rightarrow F \rightarrow 0.
\end{eqnarray}

Our goal in this paper is to study $N_p$ property of line bundles on
GIT quotients of projective varieties with some special property. 

\begin{theorem}\label{np}\textup{\cite[Lemma 1.6]{E-L}}
Let $L$ be a very ample line bundle on a projective variety
$X$. Assume that $H^1(L^{\otimes k}) = 0$ for all $k \ge 1$. 
Then $L$  satisfies $N_p$ property 
if and only if $H^1(\wedge^{m}M_L \otimes L^{\otimes n}) = 0$ for all $1 \le m \le p+1$ and $n \ge 1$. 
\end{theorem}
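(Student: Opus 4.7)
My plan is to route the statement through Green's Koszul-cohomological reformulation of the $N_p$ property. Set $V := H^0(X, L)$; Green's criterion says that $L$ satisfies $N_p$ if and only if the Koszul cohomology group $K_{i, q}(X, L)$ vanishes for every $0 \le i \le p$ and $q \ge 2$, where $K_{i, q}(X, L)$ is the middle cohomology of
$$\wedge^{i+1} V \otimes H^0(L^{\otimes(q-1)}) \longrightarrow \wedge^i V \otimes H^0(L^{\otimes q}) \longrightarrow \wedge^{i-1} V \otimes H^0(L^{\otimes(q+1)}).$$
The theorem then reduces to the natural identification
$$K_{i, q}(X, L) \;\cong\; H^1\!\left(X,\; \wedge^{i+1} M_L \otimes L^{\otimes(q-1)}\right) \qquad (i \ge 0,\; q \ge 2),$$
since the substitution $m = i+1$, $n = q-1$ translates ``$K_{i,q} = 0$ for $0 \le i \le p$, $q \ge 2$'' into the vanishing condition in the theorem.

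To establish the identification, I would apply $\wedge^{i+1}$ to the defining sequence (\ref{cansurj}) of $M_L$, obtaining
$$0 \to \wedge^{i+1} M_L \to \wedge^{i+1} V \otimes \mathcal{O}_X \to \wedge^i M_L \otimes L \to 0.$$
Tensoring with $L^{\otimes(q-1)}$ and taking the long exact sequence in cohomology, the hypothesis $H^1(L^{\otimes(q-1)}) = 0$ (applicable since $q - 1 \ge 1$) kills the middle $H^1$, giving
$$H^1\!\left(\wedge^{i+1} M_L \otimes L^{\otimes(q-1)}\right) \;\cong\; \mathrm{coker}\!\left(\wedge^{i+1} V \otimes H^0(L^{\otimes(q-1)}) \longrightarrow H^0(\wedge^i M_L \otimes L^{\otimes q})\right).$$
To recognise this cokernel as $K_{i, q}(X, L)$, I would identify $H^0(\wedge^i M_L \otimes L^{\otimes q})$ with the kernel of the Koszul differential: the latter factors as the composite of the $H^0$ map $\wedge^i V \otimes H^0(L^{\otimes q}) \to H^0(\wedge^{i-1} M_L \otimes L^{\otimes(q+1)})$ (coming from the exterior-power SES at level $i$, twisted by $L^{\otimes q}$) with the inclusion $H^0(\wedge^{i-1} M_L \otimes L^{\otimes(q+1)}) \hookrightarrow \wedge^{i-1} V \otimes H^0(L^{\otimes(q+1)})$ (injective by left exactness of $H^0$), so its kernel coincides with the kernel of the first arrow, namely $H^0(\wedge^i M_L \otimes L^{\otimes q})$.

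The main obstacle I foresee is this kernel bookkeeping: carefully matching the cokernel produced from the SES at level $i+1$ against the kernel structure of the Koszul complex at level $i$, and ensuring that the sign conventions in the factorisation of the Koszul differential are consistent. Once the identification $K_{i, q}(X, L) \cong H^1(\wedge^{i+1} M_L \otimes L^{\otimes(q-1)})$ is established, both directions of the theorem are immediate, since by Green's criterion $L$ satisfies $N_p$ exactly when $K_{i, q}(X, L) = 0$ for $0 \le i \le p$, $q \ge 2$, which under the identification translates to $H^1(\wedge^m M_L \otimes L^{\otimes n}) = 0$ for $1 \le m \le p+1$, $n \ge 1$.
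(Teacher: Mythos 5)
Your argument is correct and is essentially the standard one: the paper itself gives no proof of this statement, citing it as \cite[Lemma 1.6]{E-L}, and your route through Green's Koszul-cohomology criterion together with the identification $K_{i,q}(X,L)\cong H^1(\wedge^{i+1}M_L\otimes L^{\otimes(q-1)})$ (obtained from the exterior powers of the sequence defining $M_L$ and the hypothesis $H^1(L^{\otimes k})=0$) is exactly how that lemma is established in the literature. The kernel/cokernel bookkeeping you flag is the only delicate point, and you have set it up correctly.
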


\begin{remark} 
In characteristic zero, it suffices to prove $H^1(M^{\otimes m}_L
\otimes L^{\otimes n}) = 0$ to obtain $N_p$ property as the wedge
product $\wedge^{m}M_L$ is a direct summand of the tensor product
$M^{\otimes m}_L$.
\end{remark}

\begin{remark}\label{rmk}
In \cite{E-L}, this theorem was proved only assuming that $L$ is ample
and base point free. We will apply this result with only these
hypotheses (cf. \cite[\S 1.3, Page 509]{L}).
\end{remark}

\begin{lemma}\label{horace}
Let $E$ and $L_1, L_2,...,L_r$ be coherent sheaves on a variety $X$. Consider the multiplication maps

$\psi: H^0(E) \otimes H^0(L_1 \otimes ...\otimes L_r) \rightarrow H^0(E \otimes L_1 \otimes ... \otimes L_r)$,

$\alpha_1: H^0(E) \otimes H^0(L_1) \rightarrow H^0(E \otimes L_1)$,

$\alpha_2: H^0(E \otimes L_1) \otimes H^0(L_2) \rightarrow H^0(E \otimes L_1 \otimes L_2)$,

...,

$\alpha_r: H^0(E\otimes L_1 \otimes ... \otimes L_{r-1}) \otimes H^0(L_r) \rightarrow H^0(E \otimes L_1 \otimes ... \otimes L_{r})$.

If $\alpha_1$,...,$\alpha_{r}$ are surjective, then so is $\psi$.
\end{lemma}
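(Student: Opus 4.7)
The plan is to proceed by induction on $r$. The case $r=1$ is immediate, since $\psi$ is literally $\alpha_1$. For the inductive step, suppose the lemma is known for $r-1$ coherent sheaves. Applying the inductive hypothesis to $E$ together with $L_1,\ldots,L_{r-1}$---for which the intermediate multiplication maps are precisely $\alpha_1,\ldots,\alpha_{r-1}$, hence surjective by hypothesis---one obtains that
$$\psi_{r-1}\colon H^0(E)\otimes H^0(L_1\otimes\cdots\otimes L_{r-1})\longrightarrow H^0(E\otimes L_1\otimes\cdots\otimes L_{r-1})$$
is surjective.

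Next I would exploit the commutative square
$$
\begin{CD}
H^0(E)\otimes H^0(L_1\otimes\cdots\otimes L_{r-1})\otimes H^0(L_r) @>{\psi_{r-1}\otimes \mathrm{id}}>> H^0(E\otimes L_1\otimes\cdots\otimes L_{r-1})\otimes H^0(L_r) \\
@V{\mathrm{id}\otimes m}VV @VV{\alpha_r}V \\
H^0(E)\otimes H^0(L_1\otimes\cdots\otimes L_r) @>{\psi}>> H^0(E\otimes L_1\otimes\cdots\otimes L_r),
\end{CD}
$$
in which $m\colon H^0(L_1\otimes\cdots\otimes L_{r-1})\otimes H^0(L_r)\to H^0(L_1\otimes\cdots\otimes L_r)$ is the multiplication of sections. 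Commutativity is a matter of associativity: both compositions send a pure tensor $e\otimes s\otimes t$ to the product $e\cdot s\cdot t$ in $H^0(E\otimes L_1\otimes\cdots\otimes L_r)$. The top arrow is surjective by the inductive hypothesis (tensoring a surjection with $\mathrm{id}$), and the right arrow is surjective by assumption, so the composition going right then down is surjective. By commutativity the down-then-right composition has the same image, and since it factors through $\psi$, the map $\psi$ must itself be surjective.

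The argument is entirely formal, relying only on the associativity of section multiplication and the fact that a composition of surjections is surjective. I do not anticipate a substantive obstacle; the only genuine bookkeeping is verifying commutativity of the diagram and identifying the inductive hypothesis with the correct sub-list of $\alpha_i$'s.
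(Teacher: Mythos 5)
Your proof is correct and is essentially the paper's argument: the authors write down the single large commutative diagram obtained by unrolling your induction (composing $\alpha_1\otimes\mathrm{id},\ \alpha_2\otimes\mathrm{id},\ \ldots,\ \alpha_r$ against $\psi$) and conclude by a diagram chase, which is exactly the content of your commutative square applied $r-1$ times. The inductive packaging is a matter of presentation, not a different method.
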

\begin{proof}
We have the following commutative diagram where $id$ denotes the identity morphism:
\begin{displaymath}
\xymatrix @R=2pc @C=3pc{
H^0(E) \otimes H^0(L_1) \otimes ... \otimes H^0(L_r) \ar[r]^{\alpha_1 \otimes id} \ar[d]^{\phi} & H^0(E \otimes L_1) \otimes H^0(L_2) \otimes ... \otimes H^0(L_r) \ar[d]^{\alpha_2 \otimes id} \\
H^0(E) \otimes H^0(L_1 \otimes ... \otimes L_r)    \ar[dd]^{\psi} & H^0(E \otimes L_1 \otimes L_2) \otimes H^0(L_3) \otimes ... \otimes H^0(L_r)  \ar[d]^{\alpha_3 \otimes id} \\
   &...\ar[d]^{\alpha_{r-1} \otimes id}\\
H^0(E \otimes L_1 \otimes ... \otimes L_r)   &\ar[l]_{\alpha_r} H^0(E \otimes L_1 \otimes ... \otimes L_{r-1}) \otimes H^0(L_r)}
\end{displaymath}

Since $\alpha_1, \alpha_2,...,\alpha_r$ are surjective and this diagram is commutative, a simple diagram chase shows that  $\psi$ is surjective.
\end{proof}

The following result, known as Castelnuovo - Mumford lemma, will be used 
often in this paper. 

\begin{lemma}\label{key} \textup{\cite[Theorem 2]{Mum}}
Let $E$ be an ample and base-point free line bundle on a projective variety $X$ and let $F$ be a coherent sheaf on $X$. If $H^i(F \otimes E^{-i}) = 0$ for $i \geq 1$, then the multiplication map 
$$H^0(F \otimes E^{\otimes i}) \otimes H^0(E) \rightarrow H^0(F \otimes E^{\otimes i+1})$$
is surjective for all $i \geq 0$. 
\end{lemma}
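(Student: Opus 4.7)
I would prove the lemma by translating surjectivity into a cohomological vanishing and then killing it via a Koszul-complex chase.

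Tensoring (\ref{cansurj}) with $F \otimes E^{\otimes i}$ and passing to the long exact cohomology sequence identifies the cokernel of the multiplication map with a submodule of $H^1(M_E \otimes F \otimes E^{\otimes i})$. Thus it suffices to show $H^1(M_E \otimes F \otimes E^{\otimes i}) = 0$ for every $i \geq 0$. To this end, I would use the Koszul complex attached to the surjection $V \otimes E^{-1} \to \mathcal{O}_X$, where $V = H^0(E)$ has dimension $N+1$ and the map is the evaluation (surjective precisely because $E$ is base-point free):
$$0 \to \wedge^{N+1} V \otimes E^{-N-1} \to \cdots \to \wedge^2 V \otimes E^{-2} \to V \otimes E^{-1} \to \mathcal{O}_X \to 0.$$
This is an exact sequence of locally free sheaves, so tensoring with $F \otimes E^{\otimes i+1}$ preserves exactness. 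Breaking the tensored complex into short exact sequences
$$0 \to K_{j+1} \to \wedge^{j+1} V \otimes F \otimes E^{\otimes i - j} \to K_j \to 0, \qquad K_1 = M_E \otimes F \otimes E^{\otimes i},$$
and chasing cohomology iteratively, one sees that $H^1(K_1) = 0$ provided
$$H^j\bigl(F \otimes E^{\otimes i-j}\bigr) = 0 \quad \text{for every } j \geq 1.$$

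These last vanishings I would verify by induction on $i \geq 0$, the case $i = 0$ being exactly the hypothesis of the lemma. The inductive step uses the long exact sequence of (\ref{cansurj}) at lower twists together with the inductive hypothesis, and amounts to the standard statement that Castelnuovo--Mumford regularity with respect to $E$ propagates under tensoring by $E$. The main obstacle I anticipate is organising these two inductions compatibly, so that each Koszul chase at level $i$ uses only vanishings already established at levels $\leq i-1$; once this bookkeeping is arranged, the remainder reduces to routine long-exact-sequence manipulations.
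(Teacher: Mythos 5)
This lemma is not proved in the paper; it is quoted verbatim from Mumford (\cite[Theorem 2]{Mum}), so there is no in-text argument to compare against. Your Koszul-complex proof is the standard one (it is essentially Mumford's own argument, as presented for instance in Lazarsfeld's treatment of Castelnuovo--Mumford regularity), and it is correct: base-point freeness gives exactness of the Koszul complex of $V\otimes E^{-1}\to\mathcal{O}_X$, all of its syzygy sheaves are locally free so the complex stays exact after tensoring with $F\otimes E^{\otimes i+1}$, and the chopped-up chase reduces surjectivity to $H^j(F\otimes E^{\otimes i-j})=0$ for $j\ge 1$ (with termination for dimension reasons, since $\dim V>\dim X$). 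One point is stated more thinly than it should be: the inductive step establishing $H^j(F\otimes E^{\otimes i-j})=0$ for $i\ge 1$ (i.e.\ that $0$-regularity propagates to $i$-regularity) does not follow from the single sequence \eqref{cansurj} alone --- the connecting map lands in $H^{j+1}(M_E\otimes F\otimes E^{\otimes -j})$, whose vanishing itself requires another run of the same Koszul chase using the hypothesis. Once you make that explicit (or organise both inductions as one descending induction on cohomological degree), the proof is complete.
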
		


\section{Cohomology of the quotient variety}

Let $X$ be a flat scheme over $\mathbb{Z}$.
Let $p$ be a prime number and 
let $\overline{\mathbb{F}}_p$ denote the algebraic closure of the
finite field $\mathbb{F}_p$.  Let $X_p$ denote the
$\overline{\mathbb{F}}_p$-valued points of $X$. Let $X_{\mathbb{C}}$
denote the $\mathbb{C}$-valued points of $X$. We assume that 
$X_{\mathbb{C}}$ is a projective variety over $\complex$ and that
$X_p$ are projective varieties over 
$\overline{\mathbb{F}}_p$ for all primes. 

We assume that there is a sheaf $\mathcal{N}$ on $X$ such that the
base change of $\mathcal{N}$ to $X_{\complex}$,
 $\mathcal{N}_{\mathbb{C}}$, 
(respectively, $\mathcal{N}_p$ on $X_p$, for all primes) is an ample line bundle.


Finally assume that $X_p$ is Frobenius split for all 
primes.

Let $G$ be a reductive (not necessarily connected) 
algebraic group scheme over $\mathbb{Z}$ 
acting on $X$ such that the action map 
$G_{\mathbb{C}} \times X_{\mathbb{C}} \longrightarrow X_{\mathbb{C}}$ is a morphism. 
Assume that every line bundle on $X_\complex$ is
$G_{\complex}$-linearised and 
that 
$(X_{\mathbb{C}})_{G_{\mathbb{C}}}^{ss}(\mathcal{N}_{\complex})$ is nonempty.
We assume that the above hypotheses also hold for base change over 
$\overline{\mathbb{F}}_p$ for all but finitely many primes. 


Let $Y_{\mathbb{C}}$ denote the  GIT quotient 
$G_{\mathbb{C}}\backslash\backslash
(X_{\mathbb{C}})_{G_{\mathbb{C}}}^{ss}(\n_{\complex})$. 
Similarly let $Y_p$ denote the GIT quotient of $X_p$ with respect to the
$G_p$-linearised line bundle $\n_p$.
We further assume that $\n_{\complex}$ (respectively, $\n_p$) 
descends to $Y_{\mathbb{C}}$ (respectively, $Y_p$ for all primes).
Let $\mathcal{L}_{\mathbb{C}}$ (respectively, $\mathcal{L}_p$) 
denote the descent of $\n_{\complex}$ to $Y_{\mathbb{C}}$ (respectively, 
$\n_p$ to $Y_p$).

For the preliminaries and notion of Geometric Invariant Theory, we refer
to \cite{GIT} and \cite{Newst}. For the notion of Frobenius
splitting, see \cite{MR}.

\begin{lemma}\label{ample}
 $\li_{\complex}$ and $\li_p$ are ample
line bundles on $Y_{\complex}$ and $Y_p$ respectively.
\end{lemma}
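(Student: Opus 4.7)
The plan is to invoke the standard construction of the GIT quotient as a Proj scheme. Since $G_{\complex}$ is reductive and $\nc$ is ample, by the fundamental theorem of geometric invariant theory (cf. \cite{GIT}, \cite{Newst}), the graded ring
\[
R_{\complex} \;:=\; \bigoplus_{n \geq 0} H^0(X_{\complex}, \nc^{\otimes n})^{G_{\complex}}
\]
is a finitely generated $\complex$-algebra, and $Y_{\complex}$ is by construction $\mathrm{Proj}(R_{\complex})$, with the tautological line bundle $\mathcal{O}_{Y_{\complex}}(1)$ ample on $Y_{\complex}$.

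Next I would match the descent $\li_{\complex}$ with this tautological line bundle. Let $\pi_{\complex}\colon (X_{\complex})^{ss}_{G_{\complex}}(\nc) \to Y_{\complex}$ be the GIT quotient map. The descent hypothesis means that $\pi_{\complex}^{*}\li_{\complex} \cong \nc|_{(X_{\complex})^{ss}}$ as $G_{\complex}$-linearised line bundles. For a sufficiently divisible positive integer $n$, the natural comparison map
\[
H^0(X_{\complex}, \nc^{\otimes n})^{G_{\complex}} \;\longrightarrow\; H^0\bigl(Y_{\complex}, \li_{\complex}^{\otimes n}\bigr)
\]
is an isomorphism, and under the Proj construction $\li_{\complex}^{\otimes n}$ is identified with $\mathcal{O}_{Y_{\complex}}(n)$, which is ample. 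Since a positive tensor power of $\li_{\complex}$ is ample, $\li_{\complex}$ itself is ample on $Y_{\complex}$.

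The identical argument applies to $\li_p$ on $Y_p$ for all but finitely many primes: our standing hypotheses guarantee that $G_p$ is reductive, that $\n_p$ is ample and $G_p$-linearised, that the semistable locus is nonempty, and that $\n_p$ descends to $Y_p$. The Proj construction of the GIT quotient and the finite generation of invariants under a reductive group scheme both work in positive characteristic, so the same identification $\li_p^{\otimes n} \cong \mathcal{O}_{Y_p}(n)$ forces $\li_p$ to be ample.

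The only real point to verify with care, rather than a genuine obstacle, is the precise degree-matching between $\li$ and $\mathcal{O}_{Y}(1)$: one must pass to a sufficiently large power so that $\nc^{\otimes n}$ (respectively $\n_p^{\otimes n}$) is very ample and its $G$-invariant sections define the projective embedding of the Proj. Once this is done, ampleness of the power transfers to $\li_{\complex}$ and $\li_p$ since ampleness is detected by any positive tensor power.
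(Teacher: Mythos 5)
Your overall strategy is the same as the paper's: both arguments come down to showing that some positive tensor power of $\li_{\complex}$ coincides with the ample line bundle that GIT automatically produces on the quotient (your $\mathcal{O}_{Y_{\complex}}(1)$ from the Proj construction is exactly the ample $\mathcal{M}$ with $\phi^{*}\mathcal{M}=\nc^{\otimes n}$ furnished by \cite[Theorem 1.10]{GIT}), and then to use the fact that ampleness is detected on any positive power.

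The gap is in the step where you write that ``under the Proj construction $\li_{\complex}^{\otimes n}$ is identified with $\mathcal{O}_{Y_{\complex}}(n)$.'' The only evidence you offer for this identification is that the two bundles have canonically isomorphic spaces of global sections (in sufficiently divisible degrees), but an isomorphism of $H^0$'s does not give an isomorphism of line bundles. What is actually true is that both $\li_{\complex}^{\otimes n}$ and $\mathcal{O}_{Y_{\complex}}(n)$ pull back to $\nc^{\otimes n}$ on the semistable locus --- the first by the descent hypothesis, the second by the construction of the quotient --- so their ``difference'' lies in the kernel of $\phi^{*}\colon Pic(Y_{\complex})\rightarrow Pic\big((X_{\complex})^{ss}_{G_{\complex}}(\nc)\big)$. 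To conclude that this kernel is trivial one needs an extra input: the paper uses the standing hypothesis that every line bundle on $X_{\complex}$ is $G_{\complex}$-linearised to identify $Pic$ with $Pic_{G_{\complex}}$ on the semistable locus, and then invokes \cite[Proposition 4.2, Page 83]{Kraft} to get injectivity of $\phi^{*}$. Your argument never uses that hypothesis, which is a sign that the crucial step has been elided; without it the claimed identification, and hence the ampleness of $\li_{\complex}$ itself (as opposed to ampleness of \emph{some} line bundle whose sections agree with those of $\li_{\complex}^{\otimes n}$), is not established. The same remark applies to the positive-characteristic case.
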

\begin{proof}

Let
$\phi:(X_{\mathbb{C}})_{G_{\mathbb{C}}}^{ss}(\n_{\complex})
\rightarrow Y_{\complex}$ be the natural categorical quotient map and let 
$\phi^{\star}: Pic(Y_{\complex}) \rightarrow Pic((X_{\mathbb{C}})_{G_{\mathbb{C}}}^{ss}(\n_{\complex}))$ be
the pullback map. 

Since $\nc$ is a $G_{\complex}$-linearised line bundle
on $X_{\complex}$, by \cite[Theorem 1.10, Page 38]{GIT}, there is an
ample line bundle $\mathcal{M}$ on $Y_{\complex}$ such that the
$\phi^{\star}(\mathcal{M}) = \nc^{\otimes n}$ for some $n > 0$.  

Since $\phi^{\star}(\li_{\complex}) =\nc$, $\mathcal{M}
\otimes \li_{\complex}^{-n}$ is in the kernel of $\phi^{\star}$. Since every line
bundle on $X_{\complex}$ is $G_{\complex}$-linearised, 
$ Pic\Huge{(}(X_{\mathbb{C}})_{G_{\mathbb{C}}}^{ss}(\nc)\huge{)}
=
Pic_{G_{\complex}}((X_{\mathbb{C}})_{G_{\mathbb{C}}}^{ss}(\nc))$.
By \cite[Proposition 4.2, Page
83]{Kraft}, $\phi^{\star}$ is injective. Hence $\mathcal{M} =
\li_{\complex}^{\otimes n}$ and $\li_{\complex}$ is ample. 

Proof is similar for $\li_p$.
\end{proof}
\begin{theorem}\label{van} With the notation as above, the following statements hold.
\begin{enumerate}
\item $H^{i}(Y_{\mathbb{C}}, \mathcal{L}_{\mathbb{C}}^{e}) = 0$ for every 
$e > 0$ and $i \ge 1$,
\item Assume that $G_p$ is linearly reductive for all but finitely many
primes.  Then 
$H^{i}(Y_{\mathbb{C}}, \mathcal{L}_{\mathbb{C}}^{e})=0$ for every $e <
0$ and $i < d$, where $d$ denotes the dimension of $Y$.
\end{enumerate}
\end{theorem}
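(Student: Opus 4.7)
The plan is to establish the analogous vanishings first for the positive characteristic fibers $Y_p$ and then transfer them to characteristic zero using upper semicontinuity of cohomology in the flat proper family $Y\to\operatorname{Spec}\mathbb{Z}$. The semicontinuity step is standard: if $h^i(Y_p,\li_p^e)=0$ for all but finitely many primes $p$, then the closed locus $\{s\in\operatorname{Spec}\mathbb{Z}: h^i(Y_s,\li_s^e)\geq 1\}$ is a proper closed subset of $\operatorname{Spec}\mathbb{Z}$, hence misses the generic point, giving $h^i(Y_\mathbb{Q},\li_\mathbb{Q}^e)=0$ and then $H^i(Y_\complex,\li_\complex^e)=0$ after flat base change from $\mathbb{Q}$ to $\complex$. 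All the work is therefore in the positive characteristic vanishings.

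For part (1), the starting input is the standard Frobenius splitting vanishing on $X_p$: since $X_p$ is Frobenius split and $\n_p$ is ample, the splitting produces an injection $H^i(X_p,\n_p^e)\hookrightarrow H^i(X_p,\n_p^{pe})$, and iterating drives the twist past the Serre vanishing threshold, yielding $H^i(X_p,\n_p^e)=0$ for all $i\geq 1$ and $e\geq 1$. To push this vanishing through the GIT quotient, I would use the affine good-quotient map $\pi_p\colon (X_p)_{G_p}^{ss}(\n_p)\to Y_p$ together with the identification of $\li_p^e$ with the $G_p$-invariant sub-sheaf of $(\pi_p)_*\n_p^e$, which rewrites $H^i(Y_p,\li_p^e)$ in terms of the cohomology of $\n_p^e$ on the semistable locus. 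The cohomology on the semistable locus is then extracted from that on $X_p$ by exploiting that the unstable locus is compatibly Frobenius split inside $X_p$, so that restriction to the open semistable locus does not introduce extra positive-degree cohomology in the range we need, and the subsequent invariant-taking step transmits the vanishing to $Y_p$.

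For part (2), the added hypothesis that $G_p$ is linearly reductive for almost all primes makes the functor of $G_p$-invariants exact, which lets one descend the Frobenius splitting of the semistable locus to a Frobenius splitting of the quotient $Y_p$. With $Y_p$ then Frobenius split and $\li_p$ ample by Lemma \ref{ample}, the Mehta--Ramanathan form of Kodaira vanishing for Frobenius split projective varieties gives $H^i(Y_p,\li_p^e)=0$ for every $e<0$ and $i<d$, where $d=\dim Y_p=\dim Y_\complex$; semicontinuity then delivers the conclusion over $\complex$.

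The principal obstacle I expect is in part (1), where linear reductivity of $G_p$ is not assumed. One has to balance the Frobenius splitting of $X_p$, the structure of the unstable locus inside $X_p$, and the merely geometric (rather than linear) reductivity of $G_p$ in positive characteristic, so that the cohomology of $\li_p^e$ on $Y_p$ is nonetheless faithfully controlled by the Frobenius vanishing of $\n_p^e$ on $X_p$ and its restriction to the semistable locus. Part (2), by contrast, is more mechanical once the linear reductivity hypothesis is in place, since it bundles the quotient geometry into a Frobenius splitting of $Y_p$ to which one can apply the known vanishing theorems directly.
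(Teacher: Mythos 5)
Your part (2) follows the paper's own route: linear reductivity of $G_p$ gives a Reynolds operator, the Frobenius splitting of $X_p$ descends to $Y_p$, and the splitting-plus-Serre-vanishing iteration on the ample $\li_p$ followed by semicontinuity finishes. The genuine gap is in part (1), and it is precisely the obstacle you flag but do not resolve. You propose to prove $H^i(Y_p,\li_p^{\otimes e})=0$ in characteristic $p$ and then semicontinue; but $\li_p^{\otimes e}$ is the invariant subsheaf $(\pi_{p*}\n_p^{\otimes e})^{G_p}\subset \pi_{p*}\n_p^{\otimes e}$, and without linear reductivity there is no projector splitting this inclusion, so the cohomology of the subsheaf is not controlled by that of the ambient sheaf and the vanishing of $H^i$ on the semistable locus does not transmit to $H^i(Y_p,\li_p^{\otimes e})$. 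The two intermediate claims you lean on --- that the unstable locus is compatibly Frobenius split in $X_p$, and that restriction to the open semistable locus introduces no new positive-degree cohomology --- are neither among the hypotheses nor true in general: deleting a closed subset typically produces local-cohomology contributions in exactly the degrees you need to kill.

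The paper sidesteps all of this by never touching $Y_p$ in part (1). It first proves $H^{i}(X_{p}, \n_p^{\otimes e})=0$ for $i\geq 1$, $e\geq 1$ by the Frobenius-splitting/Serre-vanishing iteration (as you do), transfers this to $H^{i}(X_{\mathbb{C}}, \nc^{\otimes e})=0$ by semicontinuity applied to the flat $\mathbb{Z}$-model of $X$ (not of $Y$), and only then passes to the quotient, in characteristic zero, where $G_{\complex}$ is automatically linearly reductive: Teleman's quantization theorem \cite[Theorem 3.2.a]{Tel} gives $H^{i}(Y_{\mathbb{C}},\mathcal{L}_{\mathbb{C}}^{\otimes e})=H^{i}(X_{\mathbb{C}}, \nc^{\otimes e})^{G_{\complex}}$ for $i>0$ and $e\geq 0$, and the right-hand side vanishes. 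This reordering --- reduce mod $p$ on $X$, lift to $\complex$, then quotient --- is the missing idea; with it, the delicate positive-characteristic invariant theory your part (1) would require is never needed.
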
 
\begin{proof}


Since $\n_p$ is ample, by Serre's vanishing theorem, there is a positive integer $r$ such that 
$H^{i}(X_{p}, \n_p^{\otimes p^{r}})=0$ for $i\geq 1$. 

Now we will use the Frobenius splitting property of $X_{p}$ to prove
(1) (cf. \cite{MR}). Let $F$ denote the Frobenius morphism corresponding to prime $p$.

Tensoring the map $\mathcal{O}_{X_{p}} \longrightarrow F_{*}\mathcal{O}_{X_{p}}$
by $\n_p$ and noting that $\n_p\otimes
F_{*}\mathcal{O}_{X_{p}} \cong
F_{*}F^{*}\n_p=F_{*}\n_p^{\otimes p}$ (projection formula) we see that the map
$H^{i}(X_{p}, \n_p) \longrightarrow 
H^{i}(X_{p}, F_{*}\n_p^{\otimes p}) = 
H^{i}(X_{p}, \n_p^{\otimes p})$
is injective.

Iterating this process, we conclude that the map 
$H^{i}(X_{p}, \n_p)\longrightarrow H^{i}(X_{p}, \n_p^{\otimes p^{r}})$ 
is injective. 

Thus $H^{i}(X_{p}, \n_p)=0$ for $i\geq 1$.

Since $X$ is flat over $\mathbb{Z}$, 
using semicontinuity theorem \cite[Theorem III.12.8]{Har}, 
we conclude that 
$H^{i}(X_{\mathbb{C}}, \nc)=0$ for $i\geq 1$
(cf.  \cite[Proposition 1.6.2]{BK}).

Proof of 
$H^{i}(X_{\mathbb{C}}, \nc^{\otimes e})=0$ for $e\geq 2$ and for every $i\geq 1$ is similar.

Since  
$(X_{\mathbb{C}})_{G_{\mathbb{C}}}^{ss}(\nc)$ is
nonempty, using
 \cite[Theorem 3.2.a]{Tel}, we have 
$H^{i}(Y_{\mathbb{C}},
\mathcal{L}_{\mathbb{C}}^{\otimes e})=H^{i}(X_{\mathbb{C}}, \nc^{\otimes e})^{G_{\complex}}$ 
for every $i > 0$ and $e \geq 0$. 

Hence, by the above arguments,
$H^{i}(Y_{\mathbb{C}},
\mathcal{L}_{\mathbb{C}}^{e})=0$
for every $i > 0$ and $e \geq 0$. 
This proves (1).


Since $X_{p}$ is Frobenius split and $G$ is linearly reductive over $\overline{\mathbb{F}}_p$,  using 
Reynolds operator, we see that $Y_{p}$ is also Frobenius split. For a proof,
see \cite[Theorem 3.7]{Kan1}. 

It is well known that there is a positive integer $r$ such that 
$H^{i}(Y_{p}, \mathcal{L}_p^{-p^{r}})=0$ for $i\neq d$. 

Now the proof of (2) is similar to the proof of (1) using the Frobenius splitting property of 
$Y_p$.




This completes the proof of theorem.
\end{proof}



  


 
\section{$N_p$ property}

Let $X$ be a flat scheme over
$\mathbb{Z}$.
We assume that the hypotheses stated at the begininning of Section 3
hold. For simplicity of notation in this section we use letters $X, Y$
and $\li$ to denote $X_{\mathbb{C}}, Y_{\mathbb{C}}$ and
${\li}_{\mathbb{C}}$ respectively.

In this section we prove a result on $N_p$ property for $\li$ using
Theorem \ref{np}. By
Remark \ref{rmk}, we need the assumption that $\li$ is ample and base
point free. 
By Lemma \ref{ample}, $\li$
is ample. 

We assume further that $\li$ is base point free. 
Let $d=dim(Y)$. 

In \cite[Theorem 1.3]{GP1}, the authors prove a strong general result on $N_p$ property for
an ample and base point line free bundle $\li$  on a projective variety
$X$
The bounds
obtained are expressed in terms of the dimension of $X$ and regularity of
$\li$. Regularity of $\li$ is a measure of vanishing of higher
cohomology of powers of $\li$. This result is generalized to
multigraded regularity in  
\cite[Theorem 1.1]{HSS}.

Our next theorem follows from the above mentioned results. However we
provide a proof here for the sake a self-contained exposition. 


%
\begin{theorem}\label{ls}
Let $m, a \ge 1$ be a positive
integers. Then we have $H^i(Y,M^{\otimes m}_{\li^{\otimes a}} \otimes {\li}^{b-i}) =
0$ for $i \ge 1$ and $b > m+d$. 
\end{theorem}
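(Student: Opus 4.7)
The plan is to proceed by induction on $m$. Write $M := M_{\li^{\otimes a}}$ for brevity; being the kernel of a surjection of locally free sheaves, $M$ is itself a vector bundle on $Y$, so tensoring by $M^{\otimes(m-1)} \otimes \li^{\otimes(b-i)}$ preserves exactness of the defining sequence
$$0 \to M \to H^0(\li^{\otimes a}) \otimes \mathcal{O}_Y \to \li^{\otimes a} \to 0.$$
The associated long exact sequence flanks $H^i(M^{\otimes m} \otimes \li^{\otimes(b-i)})$ between $H^{i-1}(M^{\otimes(m-1)} \otimes \li^{\otimes(a+b-i)})$ and $H^0(\li^{\otimes a}) \otimes H^i(M^{\otimes(m-1)} \otimes \li^{\otimes(b-i)})$. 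For $i \ge 2$, both flanking groups vanish by the induction hypothesis, once one checks $a + b - 1 > (m-1)+d$ (from $a \ge 1$ and $b > m+d$) and $b > (m-1)+d$. The base case $m = 1$, $i \ge 2$ is the same sequence, but the flanking groups are now ordinary cohomology of positive powers of $\li$ (the exponents $a + b - i$ and $b - i$ are positive whenever $1 \le i \le d$, while higher $i$ are trivial by the dimension of $Y$), so Theorem \ref{van}(1) kills them.

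The case $i = 1$ is the heart of the argument and is handled uniformly for all $m$. The long exact sequence identifies
$$H^1\!\left(M^{\otimes m} \otimes \li^{\otimes(b-1)}\right) \cong \mathrm{coker}\bigl(\mu\bigr), \qquad \mu: H^0(\li^{\otimes a}) \otimes H^0\!\left(M^{\otimes(m-1)} \otimes \li^{\otimes(b-1)}\right) \longrightarrow H^0\!\left(M^{\otimes(m-1)} \otimes \li^{\otimes(a+b-1)}\right),$$
once one notes that $H^1(M^{\otimes(m-1)} \otimes \li^{\otimes(b-1)})$ vanishes --- by Theorem \ref{van}(1) if $m = 1$, or by the inductive hypothesis (with $i = 1$, bound $b > (m-1)+d$) if $m \ge 2$. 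So it suffices to show that $\mu$ is surjective.

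For this I apply the Castelnuovo--Mumford lemma (Lemma \ref{key}) with $E = \li$ and $F = M^{\otimes(m-1)} \otimes \li^{\otimes(b-1)}$. Its hypothesis
$$H^j\bigl(M^{\otimes(m-1)} \otimes \li^{\otimes(b-1-j)}\bigr) = 0 \quad \text{for all } j \ge 1$$
is precisely the induction hypothesis with shifted bound $b - 1 > (m-1) + d$, i.e., our standing assumption $b > m + d$; this is exactly where the sharp bound on $b$ enters. (For $m = 1$ this reduces to vanishings of $H^j(\li^{\otimes(b-1-j)})$, which follow from Theorem \ref{van}(1) when the exponent is positive and by the dimension of $Y$ once $j \ge b - 1 > d$.) Castelnuovo--Mumford then gives surjective multiplication by $H^0(\li)$ onto each successive power, and Lemma \ref{horace} assembles $a$ such multiplications into the single surjection $\mu$. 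I expect the main obstacle to be exactly this $i = 1$ step: arranging the Castelnuovo--Mumford input so that the hypothesis $b > m + d$ is precisely what makes it applicable; the $i \ge 2$ vanishings and the induction bookkeeping are then routine.
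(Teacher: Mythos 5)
Your proposal is correct and follows essentially the same route as the paper: induction on $m$, the long exact sequence coming from the defining sequence of $M_{\li^{\otimes a}}$, vanishing of the flanking terms by Theorem \ref{van}(1) or the inductive hypothesis, and surjectivity of the multiplication map via the Castelnuovo--Mumford lemma combined with Lemma \ref{horace}. The only (harmless) differences are cosmetic: you treat the $i=1$ step uniformly for all $m$ rather than separating the base case, and you invoke Castelnuovo--Mumford once to get all the intermediate multiplication maps instead of citing it repeatedly.
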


\begin{proof}
We proceed by induction on $m$. 

Let $m =1$. Let $a \ge 1$ and $b > d+1$. We first show that 
$H^1(Y,M_{\li^{\otimes a}} \otimes {\li}^{b-1}) =
0$.

Consider the sequence \ref{cansurj} with 
$F = \li^{\otimes a}$:
\begin{eqnarray}\label{cansurj1}
0 \rightarrow M_{\li^{\otimes a}} \rightarrow H^0(\li^{\otimes a}) \otimes {\mathcal O}_{Y} \rightarrow \li^{\otimes a} \rightarrow 0.
\end{eqnarray}

Tensoring with $\li^{\otimes b-1}$ and taking cohomologies, we get 
$$H^0(\li^{\otimes b-1}) \otimes H^0(\li^{\otimes a}) \xrightarrow{\alpha}
H^0(\li^{\otimes b+a-1}) \rightarrow H^1(M_{\li^{\otimes a}} \otimes
  \li^{\otimes b-1}) \rightarrow H^0(\li^{\otimes a}) \otimes
  H^1(\li^{\otimes b-1}) .$$

Since $H^1(\li^{\otimes b-1}) = 0$ by Theorem \ref{van}(1), if 
the map $\alpha$ is surjective then $H^1(M_{\li^{\otimes a}} \otimes
  \li^{\otimes b-1}) = 0$.

To prove surjectivity of $\alpha$, we will use Lemma \ref{horace} and first prove that the following map is surjective:  
$$\alpha_1: H^0(\li^{\otimes b-1}) \otimes H^0(\li^{}) \rightarrow
H^0(\li^{\otimes b}).$$
For this we use Lemma \ref{key}. The needed vanishings are
$H^i(\li^{b-1-i})=0,$ for $i = 1,\ldots,d$. These follow from 
Theorem \ref{van}(1) because $b > d+1$. 

Similarly, we obtain the surjectivity of the maps:
$$\alpha_2: H^0(\li^{\otimes b}) \otimes H^0(\li^{}) \rightarrow
H^0(\li^{\otimes b+1}),$$
$$\alpha_3: H^0(\li^{\otimes b+1}) \otimes H^0(\li^{}) \rightarrow
H^0(\li^{\otimes b+2}),$$ and so on. 
By Lemma \ref{horace}, $\alpha$ is surjective. 

For $i > 1$, we twist \eqref{cansurj1} by $\li^{\otimes b-i}$
and consider the long exact sequence in cohomology
:
$$H^{i-1}(\li^{\otimes a+b-i}) \rightarrow  
H^i(M^{}_{\li^{\otimes a}} \otimes {\li}^{\otimes b-i})
\rightarrow 
H^0(\li^{\otimes a}) \otimes
  H^i(\li^{\otimes b-i})$$

We get the desired vanishing by Theorem \ref{van}(1).

Now let $m > 1$ and suppose that the theorem holds for $m-1$. 
Let $a \ge 1$ and $b > m+d$ be given. First let $i=1$. 

Tensor the sequence \ref{cansurj1} with 
$M^{\otimes (m-1)}_{\li^{\otimes a}} \otimes {\li}^{b-1}$ and take
cohomology:

$H^0(M^{\otimes (m-1)}_{\li^{\otimes a}} \otimes {\li}^{\otimes b-1}) \otimes
H^0({\li}^{\otimes a}) \xrightarrow{\alpha} H^0(M^{\otimes (m-1)}_{\li^{\otimes a}}
\otimes {\li}^{\otimes a+b-1})
\rightarrow H^1(M^{\otimes m}_{\li^{\otimes a}} \otimes
{\li}^{\otimes b-1})
\rightarrow H^0(\li^{\otimes a}) \otimes H^1(M^{\otimes (m-1)}_{\li^{\otimes a}} \otimes {\li}^{\otimes b-1}).$

The last term is zero by induction hypothesis. Note that the
hypothesis required for $b$ holds. Hence it suffices to show that 
$\alpha$ is surjective. 

In order to show that $\alpha$ is surjective we will use Lemma 
\ref{horace} and first consider the
following
$$\alpha_1: H^0(M^{\otimes (m-1)}_{\li^{\otimes a}} \otimes {\li}^{b-1}) \otimes
H^0({\li}^{}) \rightarrow H^0(M^{\otimes (m-1)}_{\li^{\otimes a}}
\otimes {\li}^{\otimes b}).$$

By Lemma \ref{key}, this map surjects if 
$H^j(M^{\otimes (m-1)}_{\li^{\otimes a}} \otimes {\li}^{b-1-j}) = 0$ for $j
= 1,\ldots,d$. Since 
$b-1 > m-1+d$, the required vanishing
is clear from induction hypothesis applied to $m-1$. 

Now consider the map: 
$$\alpha_2: H^0(M^{\otimes (m-1)}_{\li^{\otimes a}} \otimes {\li}^{b}) \otimes
H^0({\li}^{}) \rightarrow H^0(M^{\otimes (m-1)}_{\li^{\otimes a}}
\otimes {\li}^{\otimes b+1}).$$
Using Lemma \ref{key}, as we did for $\alpha_1$, we conclude that
$\alpha_2$ is surjective too. Iterating this we obtain surjectivity of $\alpha_i$ for all $i$ and hence $\alpha$ is also surjective.

Finally for $i >1$, we have:
$$H^{i-1}(M^{\otimes (m-1)}_{\li^{\otimes a}}
\otimes {\li}^{\otimes a+b-i}) \rightarrow 
H^i(M^{\otimes m}_{\li^{\otimes a}} \otimes
{\li}^{\otimes b-i})
\rightarrow 
H^0(\li^{\otimes a}) \otimes H^i(M^{\otimes (m-1)}_{\li^{\otimes a}}
\otimes {\li}^{\otimes b-i})
$$

The middle term is zero because the other two are zero by
induction. 



This completes the proof of the theorem. 
\end{proof}

\begin{corollary}\label{main1}
Let $X$ be a flat scheme over $\mathbb{Z}$ and let $G$ be a reductive
group scheme over $\mathbb{Z}$ acting on $X$. Suppose that all the
hypotheses stated at the beginning of Section 3 hold. Let $\li$ denote
the descent to $Y_{\complex}$ of the ample line bundle $\mathcal{N}_{\complex}$ on $X_{\complex}$. 

Then 
${\li}^{\otimes a}$ has $N_p$ property for $a > p+d$. 
\end{corollary}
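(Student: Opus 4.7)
The plan is to deduce the corollary directly from Theorem \ref{ls} via the cohomological criterion for $N_p$ (Theorem \ref{np}), applied to the line bundle $\li^{\otimes a}$ on $Y = Y_{\complex}$. By the remark following Theorem \ref{np}, in characteristic zero it suffices to verify the vanishings for the tensor powers $M^{\otimes m}_{\li^{\otimes a}}$ rather than the wedge powers, and by Remark \ref{rmk} we only need $\li^{\otimes a}$ to be ample and base point free, not very ample.

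First I would check the input hypotheses. Ampleness of $\li$, and hence of $\li^{\otimes a}$, is given by Lemma \ref{ample}, while base-point freeness of $\li^{\otimes a}$ follows from the standing assumption at the beginning of Section 4 that $\li$ is base point free. The ``universal'' vanishing $H^1\bigl(Y,(\li^{\otimes a})^{\otimes k}\bigr)=0$ for $k\geq 1$ is a special case of Theorem \ref{van}(1). Thus Theorem \ref{np} reduces the proof of the $N_p$ property for $\li^{\otimes a}$ to establishing
\[
H^1\bigl(Y,\, M^{\otimes m}_{\li^{\otimes a}} \otimes (\li^{\otimes a})^{\otimes n}\bigr)\;=\;0
\qquad \text{for all } 1\leq m\leq p+1 \text{ and } n\geq 1.
\]

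Next I would recast this in the form produced by Theorem \ref{ls}. Setting $b := an+1$, the group above is precisely $H^1\bigl(Y, M^{\otimes m}_{\li^{\otimes a}}\otimes \li^{\,b-1}\bigr)$, and Theorem \ref{ls} (applied with $i=1$) gives the desired vanishing provided $b > m+d$. Since $a>p+d$ is an inequality of integers and $m\leq p+1$, we have $a\geq p+d+1\geq m+d$, so $b=an+1\geq a+1> m+d$ for every $n\geq 1$. This verifies the hypothesis of Theorem \ref{ls} and concludes the argument.

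The proof is essentially bookkeeping: the substantive cohomological content, namely the Frobenius-splitting vanishings on $Y$ encapsulated in Theorem \ref{van} and the Koszul/Castelnuovo--Mumford induction on $m$ encapsulated in Theorem \ref{ls}, is already in place. Accordingly I do not anticipate any genuine obstacle; the only delicate point is the numerical interplay among $a$, $n$, $m$, and $d$, which works out cleanly from the hypothesis $a>p+d$.
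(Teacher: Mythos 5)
Your proposal is correct and follows the same route as the paper: both deduce the corollary from Theorem \ref{ls} via the cohomological criterion of Theorem \ref{np}. You merely spell out the substitution $b = an+1$ and the numerical check $b > m+d$ that the paper leaves implicit with ``the required vanishing follows immediately.''
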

\begin{proof}
By Theorem \ref{np}, ${\li}^{\otimes a}$ has $N_p$ property if 
$H^1(M^{\otimes m}_{\li^{\otimes a}} \otimes L^{\otimes an}) = 0$ for all $1 \le m \le p+1$ and $n \ge 1$. 

We apply the above theorem with $m = 1, \ldots, p+1$ and the required vanishing 
 follows immediately. 
\end{proof}

We get a stronger result when we assume that the top cohomology of
the structure sheaf vanishes. 

\begin{corollary}\label{main}Let $X$ be a flat scheme over $\mathbb{Z}$ and let $G$ be a reductive
group scheme over $\mathbb{Z}$ acting on $X$. Suppose that all the
hypotheses stated at the beginning of Section 3 hold. Let $\li$ denote
the descent to $Y_{\complex}$ of the ample line bundle $\mathcal{N}_{\complex}$ on $X_{\complex}$. 
Suppose further that $H^d(Y_{\complex},\mathcal{O}_{Y_{\complex}})=0$. 

Then ${\li}^{\otimes a}$ has $N_p$ property for $a \ge p+d$. 
\end{corollary}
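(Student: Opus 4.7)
The plan is to sharpen Theorem~\ref{ls} by one unit: under the additional hypothesis $H^d(Y, \mathcal{O}_Y) = 0$, the vanishing
\[
H^i(Y, M^{\otimes m}_{\li^{\otimes a}} \otimes \li^{b-i}) = 0 \qquad (i \ge 1,\; m, a \ge 1)
\]
should hold in the sharper range $b \ge m + d$. Granted this improvement, the corollary follows from Theorem~\ref{np} and the subsequent remark exactly as Corollary~\ref{main1} did: one needs $H^1(M^{\otimes m}_{\li^{\otimes a}} \otimes \li^{\otimes an}) = 0$ for $1 \le m \le p+1$ and $n \ge 1$, which the sharpened statement gives whenever $an + 1 \ge m + d$, and the tightest case $n = 1,\; m = p+1$ reduces to precisely $a \ge p + d$.

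The strategy for the improvement is to rerun the induction on $m$ from Theorem~\ref{ls} and track the unique step at which the strict inequality $b > m+d$ was essential. That step is the Castelnuovo-Mumford application in the base case $m = 1$: surjectivity of
\[
\alpha_1 : H^0(\li^{b-1}) \otimes H^0(\li) \to H^0(\li^b)
\]
via Lemma~\ref{key} demands $H^j(\li^{b-1-j}) = 0$ for $j = 1, \ldots, d$. For $j < d$ the exponent $b-1-j$ is strictly positive and Theorem~\ref{van}(1) applies; the borderline case $j = d$, $b = d+1$ produces $b-1-j = 0$ and requires precisely $H^d(Y,\mathcal{O}_Y) = 0$, which is now supplied. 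This is the sole place where the new hypothesis is consumed. The remaining auxiliary vanishings in the $m=1$ case, namely $H^{i-1}(\li^{a+b-i})$ and $H^i(\li^{b-i})$ for $i \ge 2$, still involve strictly positive exponents when $b \ge d+1$ (and are trivial for $i > d$), so they are covered by Theorem~\ref{van}(1) without further input.

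The inductive step itself is unchanged. Surjectivity of the analogue of the map $\alpha$ reduces via Lemma~\ref{horace} and Lemma~\ref{key} to vanishings $H^j(M^{\otimes (m-1)}_{\li^{\otimes a}} \otimes \li^{b-1-j}) = 0$ for $j = 1, \ldots, d$, which is exactly the induction hypothesis applied at parameter $b' = b - 1 \ge (m-1) + d$. Similarly, the long-exact-sequence argument for $i > 1$ invokes only the induction hypothesis on $m-1$ at parameters $\ge (m-1) + d$, comfortably satisfied by $b$ and $a+b-1$. Hence the sharpened inequality propagates through the induction with no further loss.

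The main obstacle is diagnostic rather than computational: locating the single step in the proof of Theorem~\ref{ls} at which the original bound is tight, and verifying that the hypothesis $H^d(Y, \mathcal{O}_Y) = 0$ is exactly what was missing at that step. Once this is pinpointed, no other change to the proofs of Theorem~\ref{ls} or Corollary~\ref{main1} is required, and the bound $a \ge p + d$ drops out immediately.
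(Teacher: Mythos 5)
Your proposal is correct and follows essentially the same route as the paper: both revisit the induction in Theorem \ref{ls}, observe that the strict inequality $b>m+d$ is needed only at the base case $m=1$ where Castelnuovo--Mumford requires $H^{d}(\li^{b-1-d})=0$, and note that in the boundary case $b=d+1$ this is exactly the new hypothesis $H^{d}(Y,\mathcal{O}_{Y})=0$. Your write-up is in fact somewhat more careful than the paper's, since you explicitly verify that the inductive step and the $i>1$ cases propagate the weakened bound without further use of the hypothesis.
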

\begin{proof}
This follows from the proof of Theorem \ref{np}. In the base case
($m=1)$, we required vanishing $H^i(\li^{b-i})=0,$ for $i =
1,\ldots,d$. For $i < d$, we apply Theorem \ref{van}(1). For $i = d$,
we use the hypothesis on the structure sheaf. 
\end{proof}


\section{GIT quotients for the action of a finite group on a projective space}

Let $G$ be a finite group of order $n$. Let $\rho: G \rightarrow
GL(V)$ be a 
representation of $G$ over $\mathbb{C}$. 
$G$ operates on the projective space $X=\mathbb{P}(V)$ and every line
bundle 
on $X$ is $G$-linearised. Let $d$ be the dimension of $X$.


Note that for every point $x \in X$, 
the isotropy subgroup $G_{x}$ of $x$ in $G$
acts trivially on the fiber of $x$ in $\mathcal{O}_X(n)$. 
Hence, by \cite[Prop 4.2, Page 83]{Kraft}, 
$\mathcal{O}_X(n)$  
descends to the GIT 
quotient  $Y = G\backslash\backslash X_{G}^{ss}(\mathcal{O}_X(n))$. Let $\li$
denote the descent of $\mathcal{O}_X(n)$ to $Y$. 

Let $x \in X$.
Since $G$ is finite, there is a $s \in H^0(X,\mathcal{O}_X(1))$ such
that $s(gx) \ne 0$ for all $g \in G$. 
Let $\sigma = \prod_{g \in G} g.s$. Then $\sigma \in
H^0(X,\mathcal{O}_X(n))^G$ and $\sigma(x) \ne 0$. Hence
$\li$ is base point free. Also note that $\li$ is ample by 
Lemma \ref{ample}.

Note that the statement of Theorem \ref{van}(1) holds in this
case. The higher cohomologies of nonnegative powers of
$\mathcal{O}_X(n)$ are clearly zero and hence the higher cohomologies of
nonnegative powers of $\li$ are zero too (cf.  \cite[Theorem 3.2.a]{Tel}).



Hence, by the Corollary \ref{main}, we have the following.
\begin{theorem}
$\li^{\otimes  a}$ has $N_{p}$ property for any $a\geq p+d$. 
\end{theorem}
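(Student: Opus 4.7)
The plan is to apply Corollary \ref{main} directly. The preceding paragraphs of this section have already assembled almost everything its hypotheses demand, so the proof reduces to identifying the integral setup of Section 3 in the present case, reusing Theorem \ref{van}(1), and verifying the single remaining hypothesis $H^d(Y_{\complex}, \mathcal{O}_{Y_{\complex}}) = 0$.

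For the integral model, since $G$ is finite, averaging an arbitrary $\mathbb{Z}$-lattice in $V$ over $G$ produces a $G$-stable lattice $V_{\mathbb{Z}} \subset V$. I would set $X = \mathbb{P}(V_{\mathbb{Z}})$ and $\mathcal{N} = \mathcal{O}_X(n)$. This yields a flat scheme over $\mathbb{Z}$ with an action of the finite group scheme $G$; after inverting the finitely many primes dividing $|G|$, the reduction $G_p$ is linearly reductive, and $X_p = \mathbb{P}^d_{\overline{\mathbb{F}}_p}$ is Frobenius split (a well-known property of projective space), so all the hypotheses listed at the beginning of Section 3 hold. The descent of $\mathcal{N}_{\complex}$ to $\li$, as well as ampleness (via Lemma \ref{ample}) and base point freeness (via the averaging argument using $\sigma = \prod_{g \in G} g \cdot s$), are given in the preceding text, and the needed cohomological vanishing in Theorem \ref{van}(1) is recorded there as well.

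To handle the extra hypothesis $H^d(Y_{\complex}, \mathcal{O}_{Y_{\complex}}) = 0$, I would argue exactly as in the proof of Theorem \ref{van}: by \cite[Theorem 3.2.a]{Tel},
\[
H^d(Y_{\complex}, \mathcal{O}_{Y_{\complex}}) = H^d(X_{\complex}, \mathcal{O}_{X_{\complex}})^{G_{\complex}}.
\]
Since $X_{\complex} = \mathbb{P}^d$ and $d \geq 1$, $H^d(\mathbb{P}^d, \mathcal{O}_{\mathbb{P}^d}) = 0$, so the $G_{\complex}$-invariants vanish trivially. Corollary \ref{main} then yields the $N_p$ property for $\li^{\otimes a}$ whenever $a \geq p+d$.

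The main obstacle is not substantive; it is the bookkeeping of choosing a suitable $\mathbb{Z}$-model and checking that the finite-group hypotheses feed correctly into the framework of Section 3. Once that is in place, Corollary \ref{main} does all the real work, and the $H^d = 0$ check reduces immediately to the vanishing of top cohomology on $\mathbb{P}^d$.
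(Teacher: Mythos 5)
Your proposal is correct and follows essentially the same route as the paper: the theorem is deduced directly from Corollary \ref{main}, with the ampleness, base point freeness, descent, and vanishing statements supplied by the preceding discussion in Section 5. Your explicit verification of $H^d(Y_{\complex},\mathcal{O}_{Y_{\complex}})=0$ via Teleman's theorem and the vanishing of $H^d(\mathbb{P}^d,\mathcal{O}_{\mathbb{P}^d})$ is exactly what the paper leaves implicit in its remark that higher cohomologies of all nonnegative powers of $\mathcal{O}_X(n)$ (including the zeroth power) vanish.
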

For $p=0$, we deduce the following corollary. This result is new 
compared to \cite{KPP} since it works for every group. This result is
also new compared 
to \cite{Kang} since the bound on degree is small.
 
\begin{corollary}
$\li^{\otimes d}$ has $N_{0}$ property.
\end{corollary}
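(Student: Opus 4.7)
The plan is to specialize the preceding theorem to $p=0$. That theorem asserts that $\li^{\otimes a}$ has $N_p$ property whenever $a \geq p+d$, and is itself a direct application of Corollary \ref{main} to the GIT quotient $Y = G\backslash\backslash \mathbb{P}(V)^{ss}$. The cohomological hypotheses needed to invoke Corollary \ref{main} have already been verified in the text: ampleness and base point freeness of $\li$ were established in the preceding paragraphs, while the required higher vanishing $H^i(Y, \li^{\otimes k}) = 0$ for $i \geq 1, k \geq 0$ (the content of Theorem \ref{van}(1) in this setting), as well as $H^d(Y, \mathcal{O}_Y) = 0$, both follow from $H^i(\mathbb{P}(V), \mathcal{O}(kn)) = 0$ for $i \geq 1$ combined with the invariance identity $H^i(Y, \li^{\otimes e}) = H^i(X, \mathcal{O}_X(en))^{G}$ provided by \cite[Theorem 3.2.a]{Tel}.

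Once the preceding theorem is available, setting $p = 0$ and $a = d$ immediately gives that $\li^{\otimes d}$ has $N_0$ property. Recalling from the introduction that $N_0$ is the statement that the embedding $Y \hookrightarrow \mathbb{P}(H^0(Y, \li^{\otimes d}))$ is projectively normal (equivalently, that $\li^{\otimes d}$ is normally generated), this is exactly the content of the corollary.

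There is no real obstacle here, since the deduction is essentially a one-line specialization of the preceding theorem; all the substance lives in Theorem \ref{ls} and Corollary \ref{main}, which already package the Castelnuovo--Mumford plus Frobenius splitting machinery. What is worth emphasizing when writing the proof is the comparison with prior work: the resulting exponent $d$ on $\li = \mathcal{O}_X(1)^{\otimes |G|}|_Y$ improves on the higher power required in \cite{Kang}, while removing the solvability or pseudo-reflection restrictions of \cite{KPP}.
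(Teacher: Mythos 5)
Your proof is correct and matches the paper exactly: the corollary is obtained by setting $p=0$ and $a=d$ in the preceding theorem, whose hypotheses (ampleness, base point freeness, and the vanishing statements feeding into Corollary \ref{main}) are verified in the surrounding discussion just as you describe. The remarks about comparison with \cite{KPP} and \cite{Kang} likewise mirror the paper's own commentary.
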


Note that $\mathcal{O}_X(n)$ is the inverse of the canonical line bundle
$K_X$ of $X$. Since $K_X$ descends to $Y$, it is a
natural question to ask whether this descent is the canonical line bundle of
$Y$, if $Y$ is Gorenstein. We obtain the following result.


\begin{lemma}\label{det1}
Assume that $\rho (G)\subset SL(V)$.
Then 
the descent of the canonical line bundle $K_{X}$ of $X$ to 
$Y=G \backslash\backslash
X_{G}^{ss}(K_{X}^{-1})$  is the canonical line bundle of $Y$. 
\end{lemma}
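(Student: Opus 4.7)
The plan is to compare $\li$ with $\omega_Y$ on the open subset of $Y$ where the quotient map is \'etale, and then extend the resulting isomorphism across a subset of codimension at least two. Let $q\colon X\to Y$ denote the GIT quotient map; since $G$ is finite, every point of $X$ is semistable, so $q$ is a finite surjective morphism of projective varieties. By definition of descent, there is a $G$-equivariant isomorphism $q^*\li\cong K_X$, with $G$ acting trivially on $q^*\li$. The hypothesis $\rho(G)\subset SL(V)$ makes $Y$ Gorenstein by \cite[Theorem~1]{Wat}, so $\omega_Y$ is genuinely a line bundle, and it suffices to produce an isomorphism $\li\cong\omega_Y$ on $Y$.

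The key step is to show that the fixed locus $X^g$ has codimension at least $2$ in $X$ for every $g\in G\setminus\{1\}$. Suppose some $g\ne 1$ fixes a hyperplane of $X=\mathbb{P}(V)$; then the action of $g$ on $V$ has an eigenvalue $\mu$ of multiplicity $d$ and a single remaining eigenvalue $\lambda$. If $\mu=\lambda$ then $g$ is scalar and acts trivially on $X$, which is impossible; hence $\mu\ne\lambda$, and combined with $\det g=1$ this forces $\mu^{d+1}=\mu/\lambda\ne 1$. For any $[v]$ in the projectivised $\mu$-eigenspace, $g\in G_{[v]}$ acts on the fibre of $K_X=\mathcal{O}_X(-(d+1))$ at $[v]$ by $\mu^{-(d+1)}\ne 1$. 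But Kempf's descent criterion forces this fibre action to be trivial whenever $\li$ exists, a contradiction. Therefore $G$ acts freely on an open set $U\subset X$ with $X\setminus U$ of codimension at least $2$, and $V:=q(U)$ is an open subset of $Y$ whose complement has codimension at least $2$.

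The restriction $q|_U\colon U\to V$ is then a finite \'etale Galois cover with group $G$, so the canonical isomorphism $\omega_U\cong (q|_U)^*\omega_V$ identifies $K_X|_U$ with $(q|_U)^*\omega_V$ as $G$-equivariant line bundles (the $G$-action on $\omega_V$ being trivial). Combined with the descent isomorphism $(q|_U)^*(\li|_V)\cong K_X|_U$, uniqueness of descent along the $G$-torsor $q|_U$ yields $\li|_V\cong\omega_V=\omega_Y|_V$. Since $Y$ is normal and $Y\setminus V$ has codimension at least $2$, any trivialising section of $\li\otimes\omega_Y^{-1}$ over $V$ extends by Hartogs to a nowhere-vanishing global section, which gives $\li\cong\omega_Y$ on all of $Y$.

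The main obstacle is the second paragraph: converting the linear hypothesis $\rho(G)\subset SL(V)$, together with the existence of the descent, into the geometric statement that $G$ acts freely in codimension one on $X$. Once this is established, the remainder is a formal combination of Kempf's descent, the \'etale pullback formula for dualising sheaves, and the codimension two extension principle for line bundles on normal varieties.
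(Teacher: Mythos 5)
Your argument is correct in substance but follows a genuinely different route from the paper. The paper's proof is cohomological: it invokes Watanabe's theorem to get $Y$ Gorenstein, Teleman's quantization theorem to identify $H^{d}(Y,\li^{\otimes h})$ with $H^{d}(X,K_X^{\otimes h})^{G}$, applies Serre duality on both $X$ and $Y$ to obtain an equality of spaces of global sections, and then uses the fact that $Pic(Y)$ is cyclic (being a subgroup of $Pic(X)=\mathbb{Z}$) to conclude $\li=K_Y$ from that numerical coincidence. Your proof is geometric: you extract from the very existence of the descent (via the stabilizer criterion) the fact that no element of $G$ can fix a divisor of $\mathbb{P}(V)$, so the quotient map is \'etale in codimension one, and then you combine the \'etale pullback formula for canonical sheaves with normality and codimension-two extension. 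Your route has the advantage of not needing Teleman's theorem or the cyclicity of $Pic(Y)$, and in fact it nearly makes the Gorenstein input redundant, since $\omega_Y=j_*\omega_V=j_*(\li|_V)=\li$ for $j\colon V\hookrightarrow Y$ would give invertibility of $\omega_Y$ as a byproduct; the paper's route avoids any analysis of the ramification of $q$.

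One point in your second paragraph needs repair: the claim that an element $g\ne 1$ with $\rho(g)$ scalar is ``impossible'' is not justified and is in fact false in general --- $\rho(G)\subset SL(V)$ may well contain nontrivial scalars (necessarily $(d+1)$-th roots of unity), and then $G$ never acts freely on $X$, so $q|_U$ is not a $G$-torsor as written. This is harmless but must be addressed: such elements act trivially on $X$ and also trivially on the fibres of $K_X=\mathcal{O}_X(-(d+1))$ because $\zeta^{d+1}=\det\rho(g)=1$, so one should replace $G$ by its image $\bar G$ in $\mathrm{Aut}(X)$, note that the $G$-equivariant structures on $K_X$ and on $q^*\li$ both factor through $\bar G$, and run the torsor/descent-uniqueness argument for $\bar G$ instead. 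With that modification the proof is complete.
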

\begin{proof}
Let $\li$ denote the descent of $K_X$ to $Y$. 
By \cite[Theorem 1]{Wat}, $\mathbb{C}[V]^{G}$ is Gorenstein (cf
\cite[Corollary 8.2]{Stan} also). 
Hence $Y$ is Gorenstein. Hence, the dualising sheaf of $Y$ is the 
canonical line bundle of $Y$. We denote it by $K_{Y}$. By \cite
[Theorem 3.2]{Tel},
there is a positive integer $h$ such that $H^{d}(Y, {\li}^{\otimes
  h})=H^{d}(X, K_{X}^{\otimes h})^{G}$.  
By applying Serre duality for the variety  $X$, we have 
$H^{d}(X, K_{X}^{\otimes h})^{*}=H^{0}(X, K_{X}^{\otimes (1-h)})$. By GIT, we 
have $H^{0}(Y, {\li}^{\otimes (1-h)})=H^{0}(X, K_{X}^{\otimes (1-h)})^{G}$.
On the other hand, applying Serre duality for the variety  $Y$, we have 
$H^{d}(Y, {\li}^{h})=H^{0}(Y, {\li}^{\otimes (-h)}\otimes K_{Y})^{*}$.

Summarising, we have $H^{0}(Y, {\li}^{\otimes
  (1-h)})=H^{0}(Y,{\li}^{\otimes (-h)}\otimes K_{Y})$. By 
\cite[Corollary 4.2, Page 83]{Kraft}, $Pic(Y)$  is a subgroup of $Pic(X) =
\mathbb{Z}$ and hence it is cyclic, 
say generated by $\li_0$. Since $\li$ and $K_Y$ are powers of $\li_0$,
the above equality of 
global sections shows that $\li=K_{Y}$. Hence the descent of $K_X$ to $Y$ is $K_Y$. 
\end{proof}

\section{GIT quotients for the action of a maximal
  torus 
on the 
flag variety}

For the preliminaries on semisimple algebraic groups, semisimple Lie
algebras and root systems, we refer to 
\cite{Hump1, Hump2}. For the preliminaries on Chevalley groups we
refer to \cite{Stein}. 
 
Let $G$ be a semisimple Chevalley group over $\mathbb{C}$ of rank $n$. 
Let $T$ be a maximal torus of 
$G$, $B$ a Borel subgroup of $G$ containing $T$, which are defined over $\mathbb{Z}$. 
Let $N_{G}(T)$ denote the normaliser of $T$ in $G$. Let $W=N_{G}(T)/T$
denote the Weyl group of $G$ with respect to $T$.

We note that $G, T, B, W$ are all defined over $\mathbb{Z}$. Hence the
flag variety $G/B$ of all Borel subgroups of $G$ and Schubert
varieties are also defined over
$\mathbb{Z}$ \cite[Page 21]{Stein}. Note that the base change of any
Schubert variety to $\overline{\mathbb{F}}_p$ is 
Frobenius split 
(cf \cite[Theorem 2, Page 38]{MR} or \cite[Theorem 2.2.5, Page 69]{BK}).

We denote by $\mathfrak{g}$ the Lie algebra of $G$. 
We denote by $\mathfrak{h}\subseteq \mathfrak{g}$ the Lie algebra of $T$.
Let $R$ denote the roots of $G$ with respect to $T$.
Let $R^{+}\subset R$ be the set of positive roots with respect to $B$.
Let $S=\{\alpha_{1}, \alpha_{2}, \cdots ,\alpha_n\}\subset R^{+}$ denote 
the set of simple roots with respect to $B$.
Let $\langle . , . \rangle$ denote the restriction of the Killing form to 
$\mathfrak{h}$.
Let $\check{\alpha}_i$ denote the coroot corresponding to $\alpha_{i}$.
Let $\varpi_1, \varpi_2,\cdots, \varpi_n$ denote the fundamental weights 
corresponding to $S$.

Let $s_{i}$ denote the simple reflection in $W$ corresponding to the simple root $\alpha_{i}$.
For any subset $J$ of $\{1,2,\cdots,n\}$, we denote by $W_J$ the subgroup of $W$ generated by $s_j$, $j \in J$. 
We denote the complement of $J$ in $\{1,2,\cdots,n\}$ by $J^{c}$. For each 
$w\in W$, we choose an element $n_{w}$ in $N_{G}(T)$ such that $n_{w}T=w$.
We denote the parabolic subgroup of $G$ containing $B$ and 
$\{n_{w}: w\in W_{J^c}\}$ by $P_{J}$. In particular, we denote the maximal 
parabolic subgroup of $G$ generated by $B$ and $\{n_{s_{j}} ; j \neq i \}$ by 
$P_i$.


Let $X(B)$ denote the group of characters of $B$
and let $\chi \in X(B)$. Then, we have an action of $B$ on $\mathbb C$, namely $b.k=\chi(b^{-1})k, \,\, b \in B, 
\,\, k \in \mathbb C$. Consider the equivalence relation $\sim$ on 
$G \times \mathbb C$ defined by $(gb, b.k) \sim (g, k), g \in G, b \in
B, k \in \mathbb C$. The set of all equivalence classes is the 
total space of a line bundle over $G/B$. We denote this $G$- linearised line bundle associated to $\chi$ by $\mathcal L_{\chi}$.




Let $G=SL(n+1, \mathbb{C})$.
Let $J$ be a subset of $\{1, 2, \cdots n\}$ and let $P_J$ be the
parabolic subgroup of $G$ corresponding to $J$. 
Since $G$ is simply connected, every line bundle on
$G/P_J$ is $G$-linearised (cf. \cite[3.3, Page 82]{Kraft}).

Let 
$W^{J^c}$ be the minimal representatives of elements in $W$ with
respect to the subgroup $W_{J^c}$. 
For $w \in W^{J^c}$, let $X(w) = \overline{BwP_J/P_J} \subset G/P_J$
be the Schubert variety corresponding to $w$. Note that $X(w)$ is
$T$-stable. Hence restriction of any line bundle on $G/P_J$ to $X(w)$
is $T$-linearised. 

Let $\chi$ be a dominant character of $T$  which is in the root
lattice such that $\langle \chi,\alpha_j\rangle > 0$ for every 
$j \in J$.  Let $w \in W^{J^c}$ be such that
$X(w)_{T}^{ss}(\li_{\chi})$ is nonempty.
By \cite[Theorem 3.10.a, Page 758]{Kum}, 
$\li_{\chi}$ descends to the GIT quotient  
$T\backslash\backslash X(w)_{T}^{ss}(\li_{\chi})$. Let $\mathcal{N}_{\chi}$ denote the 
descent. 

Since $\chi$ is in the root lattice, by \cite[Theorem 2.3]{Howard},
for every $x \in X(w)_{T}^{ss}(\li_{\chi})$, 
there is a $T$-invariant section $s$ of $\mathcal{L}_{\chi}$ such that $s(x) \ne 0$.
Hence $\mathcal{N}_{\chi}$ is base point free. Also
$\mathcal{N}_{\chi}$ is 
ample by Lemma \ref{ample}.

\begin{theorem}
Let $Y  = T\backslash\backslash X(w)_{T}^{ss}(\li_{\chi})$ be the GIT
quotient of $X(w)$ for the $T$-linearised line bundle $\li_{\chi}$ on $X(w)$.
Let $d$ be the dimension of $Y$.
Let $\mathcal{N}_{\chi}$ be the descent of $\li_{\chi}$
to $Y$. Then 
$\mathcal{N}_{\chi}^{\otimes a}$ has $N_p$ property for $a \geq p+d$. 
\end{theorem}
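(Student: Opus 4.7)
The plan is to verify that the setup of Section 3 applies to the Schubert variety $X(w)$ equipped with the $T$-linearised line bundle $\li_\chi$, and then to produce the extra hypothesis $H^d(Y,\mathcal{O}_Y)=0$ so that the sharper Corollary \ref{main} (rather than Corollary \ref{main1}) yields the stated bound $a\ge p+d$.

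First I would check that all hypotheses stated at the beginning of Section 3 hold in this situation. Schubert varieties $X(w)$ in $G/P_J$ are defined over $\mathbb{Z}$, and their base change $X(w)_p$ to $\overline{\mathbb{F}}_p$ is Frobenius split for every prime $p$ (this is recalled in the text, citing \cite{MR, BK}). The torus $T$ is (linearly) reductive over $\mathbb{Z}$, and since $\chi$ is dominant with $\langle\chi,\alpha_j\rangle>0$ for $j\in J$, the line bundle $\li_\chi$ is ample on $X(w)$. The semistable locus $X(w)_T^{ss}(\li_\chi)$ is assumed nonempty, and $\li_\chi$ descends to $Y$ by \cite[Theorem 3.10.a]{Kum}. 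The text has already shown, via \cite[Theorem 2.3]{Howard} and Lemma \ref{ample}, that $\mathcal{N}_\chi$ is both ample and base point free on $Y$. All the remaining assumptions (flatness over $\mathbb{Z}$, the analogous statements in positive characteristic for all but finitely many primes, and $T$-linearisation of line bundles) are standard for the Schubert setting.

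The main task is therefore to establish $H^d(Y,\mathcal{O}_Y)=0$. For this I would use the classical fact that Schubert varieties are cohomologically trivial for $\mathcal{O}$, namely $H^i(X(w),\mathcal{O}_{X(w)})=0$ for all $i>0$ (see, e.g., \cite[Chapter 3]{BK}); this is a consequence of the rationality of Schubert varieties and can be proved via Frobenius splitting together with Serre vanishing and semicontinuity, mimicking the argument used in Theorem \ref{van}(1). Restricting this vanishing to the semistable locus $X(w)^{ss}$ and applying Teleman's theorem \cite[Theorem 3.2.a]{Tel} exactly as in the proof of Theorem \ref{van}, one obtains
$$H^d(Y,\mathcal{O}_Y)=H^d\bigl(X(w)^{ss},\mathcal{O}_{X(w)^{ss}}\bigr)^T=0.$$
With this vanishing in hand, all hypotheses of Corollary \ref{main} are satisfied, and the theorem follows immediately by applying Corollary \ref{main} to $\mathcal{N}_\chi$.

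The main obstacle is the verification of $H^d(Y,\mathcal{O}_Y)=0$: the analogous vanishing $H^i(Y,\mathcal{L}^e)=0$ for $e<0,\ i<d$ proved in Theorem \ref{van}(2) does not directly cover the case $e=0,\ i=d$, so one really does need to invoke rationality of Schubert varieties (or equivalently, the vanishing of higher cohomology of their structure sheaves) and then pass to invariants. Once that is granted, everything else is a routine verification of the ambient hypotheses and a direct appeal to Corollary \ref{main}.
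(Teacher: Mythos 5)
Your proposal is correct and follows essentially the same route as the paper, which simply invokes the preceding discussion (Frobenius splitting of Schubert varieties over $\overline{\mathbb{F}}_p$, descent, ampleness and base point freeness of $\mathcal{N}_{\chi}$) together with Corollary \ref{main}. You in fact go one step further than the paper's one-line proof by explicitly verifying the hypothesis $H^d(Y,\mathcal{O}_Y)=0$ via the vanishing $H^i(X(w),\mathcal{O}_{X(w)})=0$ for $i>0$ and Teleman's theorem, a detail the paper leaves implicit.
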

\begin{proof}
This follows from the above discussion and Corollary \ref{main}.
\end{proof}

Now let $X = G/P_J$. 
We apply this theorem to the inverse of the canonical line bundle
$K_X$ of
$X$.

Let $R_{J}^{+}$ denote the set of all positive roots $\beta$
satisfying $\beta\geq \alpha_{j}$ for some $j \in J$.
Let $\chi_{J}$ be the sum of all elements in $R_{J}^{+}$.
Then, by equality (6) in \cite[Page 229]{Jan},
we have, $K_{X}^{-1}=\mathcal{L}_{\chi_{J}}$. Note that 
$\langle \chi_J,\alpha_j\rangle > 0$ for every $j \in J$. Hence, by 
\cite[Remark 1, Page 232]{Jan}, $K_{X}^{-1}$ is ample. 

By using similar arguments as above,  $K_X^{-1}$ descends
to the GIT quotient $T\backslash\backslash
X_{T}^{ss}(K_{X}^{-1})$. Let $\li$ denote the descent. Again using
similar arguments as above, we see that $\li$ is ample and base
point free.



Let $d = dim(X)-dim(T)$. We have 

\begin{corollary}
$\mathcal{L}^{\otimes a}$ has $N_p$ property for $a \geq p+d$. 
\end{corollary}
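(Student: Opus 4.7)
The plan is to deduce the corollary from Corollary \ref{main} applied to the pair $(X, T)$ with $X = G/P_J$ and the $T$-linearised line bundle $\mathcal{N} = \mathcal{L}_{\chi_J} = K_X^{-1}$. First I would verify the hypotheses listed at the beginning of Section 3. The flag variety $G/P_J$ is flat over $\mathbb{Z}$, and its base change $X_p$ to $\overline{\mathbb{F}}_p$ is Frobenius split for every prime $p$ (see \cite[Theorem 2.2.5]{BK}). The maximal torus $T$ is a split reductive group scheme over $\mathbb{Z}$, linearly reductive in every characteristic, so the hypothesis needed for Theorem \ref{van}(2) is also satisfied. Since $G = SL(n+1,\complex)$ is simply connected, every line bundle on $G/P_J$ is $T$-linearised. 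Because $\chi_J$ is the sum of elements of $R_J^+$ and hence lies in the root lattice, the discussion preceding the corollary (using \cite[Theorem 3.10.a]{Kum} and \cite[Theorem 2.3]{Howard}) provides the nonemptiness of the semistable locus, the descent of $\mathcal{L}_{\chi_J}$ to $Y$, and the base point freeness of $\mathcal{L}$; ampleness of $\mathcal{L}$ follows from Lemma \ref{ample}.

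Next I would verify the additional hypothesis of Corollary \ref{main}, namely $H^d(Y, \mathcal{O}_Y) = 0$. By the same application of \cite[Theorem 3.2.a]{Tel} used in the proof of Theorem \ref{van}(1), one has $H^i(Y, \mathcal{O}_Y) = H^i(X, \mathcal{O}_X)^T$ for every $i > 0$. Since $X = G/P_J$ is a flag variety, $H^i(X, \mathcal{O}_X) = 0$ for all $i \geq 1$, and in particular $H^d(Y, \mathcal{O}_Y) = 0$, as needed. With all hypotheses verified, Corollary \ref{main} yields at once that $\mathcal{L}^{\otimes a}$ has $N_p$ property for every $a \geq p+d$.

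The step that requires the most care is the verification of the GIT inputs, in particular the nonemptiness of $X_T^{ss}(K_X^{-1})$ and the descent of $\mathcal{L}_{\chi_J}$ to $Y$. Both hinge on $\chi_J$ lying in the root lattice, which holds tautologically since $\chi_J$ is a sum of roots, combined with the regularity condition $\langle \chi_J, \alpha_j \rangle > 0$ for $j \in J$ recorded in the discussion above. The remaining steps are essentially bookkeeping: invoking the cohomological formula from \cite{Tel} to obtain the vanishing $H^d(Y, \mathcal{O}_Y) = 0$, and then feeding everything into Corollary \ref{main}.
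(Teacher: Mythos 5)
Your proof is correct and follows essentially the same route as the paper: both deduce the statement from Corollary \ref{main} applied to $X=G/P_J$ with $\mathcal{N}=K_X^{-1}=\mathcal{L}_{\chi_J}$, using the descent, ampleness and base point freeness of $\mathcal{L}$ established in the preceding discussion. Your explicit verification that $H^d(Y,\mathcal{O}_Y)=H^d(G/P_J,\mathcal{O}_{G/P_J})^T=0$ addresses a hypothesis of Corollary \ref{main} that the paper leaves implicit, so this is a welcome added detail rather than a divergence.
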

\begin{remark}
For simple algebraic groups $G$ of types different from $A_n$ canonical line bundle of the flag variety $G/B$ does not, in general, descend to the GIT quotient. For instance,  if $G$ is of type $B_3$, the coefficient of simple root $\alpha_1$ in the expression of $2\rho$ is 5. By
\cite[Theorem 3.10.b, Page 758]{Kum}, we see that the canonical line bundle of $G/B$ does not descend to the GIT quotient 
$T\backslash\backslash (G/B)_{T}^{ss}(\li(2\rho))$.
\end{remark}
{\bf Acknowledgements} We thank T.R. Ramadas for helpful
discussions. We thank Milena Hering for bringing to our notice the
results on regularity in \cite{GP1} and \cite{HSS}. These results enabled us to get better
bounds in our results. 

\end{document}